\documentclass[a4paper, 11pt]{article}
\usepackage{amsmath,amssymb,amsthm,xypic} 
\input xy
\xyoption{all}

\usepackage[hypertex]{hyperref}

\usepackage[totalheight=24 true cm, totalwidth=15 true cm]{geometry}
\usepackage{color}

\title{An intrinsic characterization of Picard-Vessiot extensions}
\author{Michael Wibmer\\ RWTH Aachen \\michael.wibmer@matha.rwth-aachen.de}

\newtheorem{theo}{Theorem}[section]
\newtheorem{lemma}[theo]{Lemma}

\newtheorem{cor}[theo]{Corollary}
\newtheorem{defi}[theo]{Definition}

\theoremstyle{definition}

\newcommand{\Gl}{\operatorname{GL}}

\newcommand{\D}{\Delta}

\newcommand{\de}{\delta}

\begin{document}

\maketitle

\begin{abstract}
We present a characterization of differential Picard-Vessiot extensions which is analogous to the field theoretic characterization of Galois extensions in classical Galois theory.
\end{abstract}

\section{Introduction}
Roughly speaking there are two approaches to classical Galois theory: There is the explicit equational approach where one starts with a polynomial, constructs the splitting field and realizes the Galois group as a permutation group of the roots. On the other hand, there is also the intrinsic field theoretic approach where the Galois group is a group of field automorphisms and the Galois extension is characterized by a normality property: A separable algebraic field extension $L|K$ is Galois if and only if for every field extension $M$ of $L$ and every $K$-morphism $\tau\colon L\to M$ one has $\tau(M)=M$. (Of course it suffices to take as $M$ an algebraic closure of $K$ containing $L$.)

In the Galois theory of linear differential equations the explicit equational approach is predominating. However, in \cite[Prop. 3.9, p. 27]{Magid:LecturesOnDifferentialGaloisTheory} (see also \cite{Magid:DifferentialGaloisTheoryNotices}) one can find a characterization of Picard-Vessiot extensions which is ``getting rid'' of the equation.

\begin{theo}
Let $L|K$ be a differential field extension \footnote{In this introduction all differential fields are understood to be ordinary and of characteristic zero. Moreover the constants of the base differential field $K$ are assumed to be algebraically closed.} and $k$ the field of constants of $K$. Then $L|K$ is Picard-Vessiot if and only if
\begin{enumerate}
 \item $L$ is generated over $K$ by a finite dimensional $k$-vector space $V\subset L$.
\item There is a group $G$ of differential automorphisms of $L|K$ with $G(V)\subset V$ and $L^G=K$.
\item $L|K$ has no new constants.
\end{enumerate}
\end{theo}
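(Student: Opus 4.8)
The plan is to establish the two implications separately; the converse is where the content lies.

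For the forward implication I would use that, $L|K$ being Picard--Vessiot, there is a linear differential equation over $K$ --- which I may regard as a first order matrix system $Y'=AY$ with $A\in K^{n\times n}$ --- admitting a fundamental solution matrix $\mathcal Y\in\Gl_n(L)$ with $L=K(\mathcal Y_{ij})$ and with the field of constants of $L$ still equal to $k$; this last point is (iii). Then I would set $V:=\sum_{i,j}k\,\mathcal Y_{ij}\subseteq L$, a finite dimensional $k$-vector space with $K(V)=L$, which gives (i), and take $G:=\gal(L|K)$, the group of all differential $K$-automorphisms of $L$. For $g\in G$ the matrix $g(\mathcal Y)$ is again a fundamental solution matrix of $Y'=AY$ (since $g$ commutes with the derivation and fixes $A$), hence $g(\mathcal Y)=\mathcal Y\,C_g$ for some $C_g\in\Gl_n(k)$, so that $g(\mathcal Y_{ij})=\sum_\ell\mathcal Y_{i\ell}(C_g)_{\ell j}\in V$ and $G(V)=V$; and $L^G=K$ is the standard fact that the fixed field of the differential Galois group of a Picard--Vessiot extension is the base field (this is where the algebraic closedness of $k$ enters). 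That gives (ii).

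For the converse I would assume (i), (ii), (iii) and exploit Wronskians. Fix a $k$-basis $v_1,\dots,v_n$ of $V$. By (iii) the field of constants of $L$ is $k$, so the $v_i$ are linearly independent over the constants, the Wronskian $w:=\operatorname{Wr}(v_1,\dots,v_n)\in L$ is nonzero, and one has the linear differential operator of order $n$ over $L$,
\[
\mathcal L(y):=\frac{\operatorname{Wr}(y,v_1,\dots,v_n)}{w},
\]
with leading coefficient $\pm1$ (an integer), whose solution space in $L$ is exactly $\Sol_L(\mathcal L)=V$: indeed $\mathcal L(y)=0$ iff $y,v_1,\dots,v_n$ are linearly dependent over $k$, iff $y$ lies in the $k$-span of the $v_i$ (using the Wronskian criterion and that $v_1,\dots,v_n$ are $k$-linearly independent). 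The crux is that the coefficients of $\mathcal L$ already lie in $K$. First, $G(V)=V$, because each $g\in G$ restricts to an injective $k$-linear endomorphism of the finite dimensional space $V$. Now for $g\in G$ let $g(\mathcal L)$ be the operator obtained by applying $g$ to the coefficients of $\mathcal L$; it still has order $n$, its leading coefficient (an integer $\pm1$) is fixed by $g$, and since $g$ commutes with the derivation one has $\Sol_L(g(\mathcal L))=g(\Sol_L(\mathcal L))=g(V)=V$. But a linear operator of order $n$ over $L$ with a given leading coefficient is determined by its solution space once that space has $k$-dimension $n$: the difference $\mathcal L-g(\mathcal L)$ has order $<n$ yet kills $V$, so if it were nonzero it would be a nonzero operator whose solution space in $L$ has $k$-dimension $\ge n$, exceeding its order --- impossible, since the $k$-dimension of the solution space in $L$ of a linear operator is at most its order (here using again that the constants of $L$ are $k$). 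Hence $g(\mathcal L)=\mathcal L$ for all $g\in G$, and every coefficient of $\mathcal L$ lies in $L^G=K$.

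It then only remains to read off the conclusion: $\mathcal L$ is a linear differential equation of order $n$ over $K$ whose solution space $V$ in $L$ has dimension $n$, so $v_1,\dots,v_n$ is a fundamental system of solutions; by (i) we have $L=K(v_1,\dots,v_n)$, which coincides with the differential subfield $K\langle v_1,\dots,v_n\rangle$ of $L$ generated by this fundamental system (as $L$ is a differential field); and by (iii) the constants of $L$ equal those of $K$. These are exactly the defining properties of a Picard--Vessiot extension for $\mathcal L$. I expect the one real obstacle to be the descent of the coefficients of $\mathcal L$ from $L$ to $K$; this is the step that genuinely uses the hypotheses --- $G(V)=V$ together with $L^G=K$ to force the $G$-invariance of $\mathcal L$, and (iii) both to form the Wronskian of a $k$-basis of $V$ and to apply the dimension bound for solution spaces.
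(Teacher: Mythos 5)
This theorem is quoted in the introduction purely as background and is not proved in the paper itself --- it is cited from Magid's \emph{Lectures on Differential Galois Theory}, so there is no in-paper proof to compare against. Your argument is correct and is essentially the classical proof of that cited result: form the order-$n$ operator $\mathcal{L}(y)=\operatorname{Wr}(y,v_1,\dots,v_n)/\operatorname{Wr}(v_1,\dots,v_n)$ from a $k$-basis of $V$ (nonzero Wronskian by (iii)), upgrade $G(V)\subset V$ to $G(V)=V$ by injectivity and finite dimension, use the bound that a nonzero operator of order $<n$ cannot annihilate an $n$-dimensional space of solutions over the constants to get $g(\mathcal{L})=\mathcal{L}$, and descend the coefficients to $K=L^G$.
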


Another intrinsic characterization is given in \cite[Def. 1.8, p. 132 and Theorem 3.11, p. 141]{AmanoMasuokaTakeuchi:HopfPVtheory}.

\begin{theo}
Let $L|K$ be a finitely differentially generated differential field extension with no new constants. Then $L|K$ is Picard-Vessiot if and only if there exists a differential $K$-subalgebra $R$ of $L$ with quotient field $L$ such that $R\otimes_K R$ is generated by constant elements as a left $R$-module.
\end{theo}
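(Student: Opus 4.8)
The plan is to take for $R$ a Picard--Vessiot ring of $L|K$, so that $R\subseteq L$ is $\delta$-simple, finitely generated over $K$, $R^\delta=k$ and $\operatorname{Frac}(R)=L$. Writing $H=(R\otimes_K R)^\delta$, I would invoke the fundamental theorem of Picard--Vessiot theory: $H$ is the coordinate ring of the differential Galois group scheme $G=\Aut^\delta(L|K)$, $\spec(R)$ is a $G$-torsor over $\spec(K)$, and, crucially, this torsor becomes trivial after the base change $\spec(R)\to\spec(K)$; equivalently, the canonical $R$-algebra map $R\otimes_k H\to R\otimes_K R$, $r\otimes h\mapsto(r\otimes 1)h$, is an isomorphism. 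Since $H$ is a subalgebra of $R\otimes_K R$, this says in particular that $R\otimes_K R$ is generated as a left $R$-module by the constant elements $H$, which is exactly what is asserted.

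\textbf{The module condition implies Picard--Vessiot: the key isomorphism.}
Conversely, suppose $R\subseteq L$ is a differential $K$-subalgebra with $\operatorname{Frac}(R)=L$ such that $R\otimes_K R=R\cdot H$ with $H:=(R\otimes_K R)^\delta$. From $k=K^\delta\subseteq R\subseteq L$ and the absence of new constants one gets $R^\delta=k$. The heart of the argument is to show that the canonical $\delta$-$R$-algebra morphism $\gamma\colon R\otimes_k H\to R\otimes_K R$ ($\delta$ acting on the left tensor factor of the source and diagonally on the target) is an \emph{isomorphism}; surjectivity is precisely the hypothesis. For injectivity I would use the $K$-flat inclusions $R\otimes_K R\hookrightarrow L\otimes_K R\hookrightarrow L\otimes_K L$ (base changes of $R\hookrightarrow L$ along a field) together with $R\otimes_k H\hookrightarrow L\otimes_k H$, and thereby reduce the injectivity of $\gamma$ to the injectivity of the canonical map $L\otimes_k(L\otimes_K L)^\delta\to L\otimes_K L$, where $L\otimes_K L$ is viewed as a differential $L$-algebra through its first tensor factor. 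That last map is injective by the standard fact that constants of a differential $L$-algebra which are linearly independent over the field of constants of $L$ are linearly independent over $L$ (the usual minimal-relation/Wronskian argument), and here the relevant field of constants is $k$ precisely because $L|K$ has no new constants. A diagram chase then gives $\ker\gamma=0$.

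\textbf{Reconstructing the Picard--Vessiot structure.}
Granting that $\gamma$ is an isomorphism, the remaining steps are standard. Applying $\gamma$ twice identifies $R\otimes_K R\otimes_K R$ with $R\otimes_k H\otimes_k H$, hence $(R\otimes_K R\otimes_K R)^\delta$ with $H\otimes_k H$ (using $R^\delta=k$); the $\delta$-equivariant map $a\otimes b\mapsto a\otimes 1\otimes b$ then restricts to a comultiplication $H\to H\otimes_k H$, the multiplication map $R\otimes_K R\to R$ restricts to a counit $H\to k$, and the flip of the two factors restricts to an antipode, making $H$ a $k$-Hopf algebra. The second coembedding $R\to R\otimes_K R\cong R\otimes_k H$ is then a coaction of $H$ on $R$ whose ring of coinvariants is $K$, and the fact that $\gamma$ is an isomorphism says exactly that $\spec(R)$ is an $\spec(H)$-torsor over $\spec(K)$. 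A finiteness argument now enters: since $L|K$ is finitely differentially generated and $R$ is a comodule algebra over $H$ (hence a filtered union of finitely generated pieces defined over finitely generated Hopf subalgebras of $H$), one extracts a finitely generated differential $K$-subalgebra $R'\subseteq R$ with $\operatorname{Frac}(R')=L$ on which the torsor structure survives; passing to a suitable localization of $R'$ to make it $\delta$-simple and choosing a closed embedding of the corresponding linear algebraic group into some $\Gl_n$ to pull back a fundamental matrix exhibits this algebra as a Picard--Vessiot ring, so $L=\operatorname{Frac}(R')$ is Picard--Vessiot.

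\textbf{Where the difficulty lies.}
The injectivity of $\gamma$ and the construction of the Hopf structure are soft; the genuine obstacle in the converse is the finiteness/$\delta$-simplicity step, and it is exactly there that the hypothesis that $L|K$ be \emph{finitely} differentially generated does its work — this is what separates the statement from a tautological reformulation of a definition. If instead one adopts the Hopf-algebraic definition of a Picard--Vessiot extension of Amano--Masuoka--Takeuchi as the working definition, then the content of the theorem is essentially the equivalence between ``$\gamma$ surjective'' and ``$\gamma$ an isomorphism with $H$ finitely generated'', and the weight falls entirely on the two middle paragraphs above.
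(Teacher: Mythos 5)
First, a point of reference: the paper does not actually prove this statement --- it is quoted in the introduction from Amano--Masuoka--Takeuchi as one of the known intrinsic characterizations --- so there is no internal proof to compare yours against. Judged on its own terms, your forward direction is correct, and can even be done without invoking the torsor theorem: taking $R=K[Y_{ij},\det(Y)^{-1}]$ for a fundamental matrix $Y$, the matrix $Z=(Y\otimes 1)^{-1}(1\otimes Y)$ lies in $\Gl_n\bigl((R\otimes_K R)^\D\bigr)$ because $A_i\otimes 1=1\otimes A_i$, and the identity $1\otimes Y=(Y\otimes 1)Z$ already exhibits $R\otimes_K R$ as generated over $R\otimes 1$ by constants. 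Your middle paragraph (surjectivity of $\gamma$ by hypothesis, injectivity via the minimal-relation lemma for constants over the $\D$-field $L$ with $L^\D=k$) and the extraction of the Hopf structure on $H$ are also sound.

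The genuine gap is the final paragraph, in three places. (a) The subalgebra $R$ you are handed is not assumed finitely generated, and the filtered-union extraction of a finitely generated $R'$ does not obviously preserve the hypothesis: the constants $h_i$ witnessing $1\otimes a\in R'\cdot(R'\otimes_KR')^\D$ for the chosen generators live a priori only in $R\otimes_K R$, so enlarging $R'$ to contain them creates new elements needing new witnesses, and you have not shown this zigzag terminates at a finitely generated stage. (b) ``Passing to a suitable localization of $R'$ to make it $\de$-simple'' is not a valid step --- localization does not produce $\D$-simple rings in general, and no reason is given why it would here; in the Hopf-theoretic treatments simplicity is \emph{deduced} from the torsor isomorphism, not arranged by localizing. (c) ``Choosing a closed embedding of $G$ into some $\Gl_n$ to pull back a fundamental matrix'' is exactly the point where an integrable system $\de_i(Y)=A_iY$ with $A_i\in K^{n\times n}$ must be manufactured; this requires an actual construction (a $G$-equivariant map $\spec(R')\to \Gl_{n,K}$, obtained for instance from the triviality of the induced $\Gl_n$-torsor, together with the verification that $\de_i(Y)Y^{-1}$ descends to $K$), and naming the step is not supplying it. Finally, note that within this paper the converse falls out of part (i) of the final theorem in a few lines: the hypothesis gives $1\otimes b\in (L\otimes 1)\cdot(L\otimes_KL)^\D$ for every $b\in R$, and for $a=b/c\in L$ with $b,c\in R$ the element $1\otimes c$ is a unit of $L\otimes_K L$, so $\tau_t(L)\subset \tau_s(L)(L\otimes_KL)^\D$ and that theorem yields that $L|K$ is Picard-Vessiot --- bypassing the Hopf-algebraic reconstruction entirely.
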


None of the above two characterizations is really close in spirit to the field theoretic characterization of classical Galois extension by normality. On the other hand, Kolchin's definition of strongly normal extensions can be seen as a differential analog of the classical normality condition. Recall that a finitely differentially generated extension of differential fields $L|K$ without new constants is strongly normal if for every differential field extension $M|L$ and every differential embedding $\tau\colon L\to M$ over $K$ one has
$\tau(L)\subset LM^\de$. Here $M^\de$ are the constants of $M$ and $LM^\de$ is the field compositum of $L$ and $M^\de$ inside $M$.

Owing to the theory of strongly normal extensions one can give another intrinsic characterization of Picard-Vessiot extensions
(\cite[Theorem 2, p. 891]{Kolchin:OntheGaloisTheoryofDifferentialfields} or \cite[Theorem 10.5, p. 4149]{Kovacic:GeometricCharacterizationofStronglyNormal}): An extension of differential fields is Picard-Vessiot if and only if it is strongly normal and the Galois group is linear (i.e., affine).

This characterization presupposes knowledge of the Galois group as an algebraic group and is not entirely field theoretic. So a characterization of Picard-Vessiot extensions by ``normality'' is still missing. Here we will fill in this gap: One can strengthen the strongly normal property by replacing the differential field extension $M$ of $L$ by an arbitrary differential $L$-algebra. Our main result is that this strengthening precisely yields a characterization of Picard-Vessiot extensions:

\begin{theo}
Let $L|K$ be a finitely differentially generated differential field extension with no new constants. Then $L|K$ is Picard-Vessiot if and only if for every differential $L$-algebra $M$ and every differential embedding $\tau\colon L\to M$ over $K$ we have $\tau(L)\subset LM^\de$.
\end{theo}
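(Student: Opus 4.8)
The plan is to prove the two implications of the equivalence separately, taking the Amano--Masuoka--Takeuchi characterization (Theorem~1.2) as known.

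Assume first that $L|K$ is Picard--Vessiot, with Picard--Vessiot ring $R=K[Y_{ij},\tfrac{1}{\det Y}]$, where $Y\in\Gl_n(R)$ is a fundamental solution matrix of $\de Y=AY$ for some $A\in K^{n\times n}$, so that $L=\operatorname{Frac}(R)$. Let $M$ be a differential $L$-algebra with structure map $\iota\colon L\to M$ and let $\tau\colon L\to M$ be a differential embedding over $K$. Regarding $Y$ as a matrix over $M$ via $\iota$ and putting $C:=Y^{-1}\tau(Y)$, the identities $\de Y=AY$ and $\de(\tau Y)=A\,\tau Y$ (the latter because $\tau$ fixes the entries of $A$) give $\de C=0$, and the same computation applied to $\tau(Y)^{-1}Y$ gives $\de(C^{-1})=0$; hence $C\in\Gl_n(M^\de)$, and the entries of $\tau(Y)=YC$ and of $\tau(Y)^{-1}=C^{-1}Y^{-1}$ all lie in $\iota(L)\cdot M^\de$. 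Thus $\tau(R)\subseteq\iota(L)\cdot M^\de$. Since every nonzero element of $\tau(R)$ is a unit in $M$ (its inverse being the $\tau$-image of the corresponding inverse in $L$), we conclude $\tau(L)=(\tau(R)\setminus\{0\})^{-1}\tau(R)\subseteq LM^\de$, where $LM^\de$ denotes the differential $\iota(L)$-subalgebra of the total quotient ring of $M$ generated by $M^\de$ (for $M$ a field this recovers Kolchin's compositum, so the condition genuinely generalizes strong normality).

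For the converse, assume the stated normality property. Restricting it to the differential \emph{field} extensions $M$ of $L$ recovers precisely Kolchin's strong--normality condition, so $L|K$ is strongly normal. By the Kolchin--Kovacic theorem recalled in the introduction it now suffices to prove that the Galois group of $L|K$ is affine, and by Theorem~1.2 it is enough to exhibit a differential $K$-subalgebra $R$ of $L$ with $\operatorname{Frac}(R)=L$ such that $R\otimes_K R$ is generated by constants as a left $R$-module. To this end I would apply the hypothesis to $M=L\otimes_K L$, made a differential $L$-algebra via $\ell\mapsto\ell\otimes1$, together with the differential $K$-embedding $\tau(\ell)=1\otimes\ell$: since $L\otimes_K L$ is generated as a ring by $L\otimes1$ and $1\otimes L=\tau(L)$, the inclusion $\tau(L)\subseteq LM^\de$ forces $L\otimes_K L$ to equal, up to inverting units, the subalgebra $(L\otimes1)\cdot(L\otimes_K L)^\de$; as $L|K$ is finitely generated one has $\trdeg_k(L\otimes_K L)^\de\leq\trdeg_K L<\infty$, and a finiteness argument then lets one choose a finitely generated model $R\subseteq L$ with $R\otimes_K R=R\cdot(R\otimes_K R)^\de$, whence $L|K$ is Picard--Vessiot by Theorem~1.2.

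The step I expect to be the main obstacle is the interplay between the field $L$ and a finitely generated differential subalgebra $R$, in the presence of a genuinely arbitrary differential $L$-algebra $M$ (no domain, reducedness or noetherian hypotheses). In the forward direction this is the passage from $\tau(R)\subseteq\iota(L)M^\de$ to $\tau(L)\subseteq LM^\de$: one must show that the inverses of elements of $\tau(R)$, which do exist in $M$, can be realised inside $M^\de$-generated localisations of $\iota(L)$, and this is exactly the point where the precise meaning of the compositum $LM^\de$ for non-field $M$ has to be pinned down. In the converse it is the descent of the $L$-level identity to a finitely generated $R$ --- equivalently, the proof that the strongly normal Galois group is affine --- which is where the hypothesis must really be used for differential $L$-algebras $M$ that are not fields, since over fields it only yields strong normality.
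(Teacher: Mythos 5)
Your forward direction is essentially the paper's own argument (the constant matrix $C=Y^{-1}\tau(Y)$, then clearing denominators using that $\tau$ of a nonzero element of $L$ is a unit of $M$), and it is correct. Your converse also correctly identifies the decisive test object, namely $M=L\otimes_K L$ with $\tau(\ell)=1\otimes\ell$ --- this is exactly what the paper does. But there is a genuine gap at the step you describe as ``up to inverting units \dots a finiteness argument then lets one choose a finitely generated model $R$ with $R\otimes_K R=R\cdot(R\otimes_K R)^\de$.'' The hypothesis only gives you, for each $b\in L$, an identity $1\otimes b=\bigl(\sum a_id_i\bigr)/u$ with $a_i\in L$, $d_i\in(L\otimes_K L)^\de$ and $u\in (L\otimes 1)\cdot(L\otimes_K L)^\de$ a unit of $L\otimes_K L$. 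The Amano--Masuoka--Takeuchi criterion (and any variant of it) requires a \emph{denominator-free} statement: $R\otimes_K R$ generated by constants as a left $R$-module. Nothing in your argument controls the unit $u$, and without such control you cannot pass from the fractional identity to the module-generation statement; ``inverting units'' is precisely what you are not allowed to do. This is where the paper brings in its one nontrivial external ingredient, the Ax--Halperin--Lichtenbaum--Sweedler units theorem: after first reducing to the case where $K$ is relatively algebraically closed in $L$ (via the transitivity lemma for the identity component of the Galois group), every unit of $L\otimes_K L$ is a split tensor $u=a'\otimes a$, so multiplying by $1\otimes a$ clears the denominator and shows $ab$ and $a$ both lie in Kovacic's ring $R=\{c\in L\mid 1\otimes c\in L\cdot(L\otimes_K L)^\de\}$, whence $b\in\operatorname{Quot}(R)$. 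Your proposal contains neither the reduction to the relatively algebraically closed case (needed because otherwise $L\otimes_K L$ has idempotents and its units are not split tensors) nor any substitute for the units theorem, and the transcendence-degree bound you invoke does not address this.

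A secondary, smaller point: the detour through strong normality and affineness of the Galois group is redundant once you aim at the Amano--Masuoka--Takeuchi criterion, and restricting $R\otimes_K R$ versus $L\otimes_K L$ changes the ring of constants, which your descent to a ``finitely generated model'' would also have to track. The paper avoids both issues by quoting Kovacic's result that $\operatorname{Quot}(R)$ for the ring $R$ above is automatically Picard--Vessiot over $K$, so that the whole burden of the proof is the single equality $\operatorname{Quot}(R)=L$, which is exactly what the units theorem delivers.
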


We will also establish an analogous result for the case of several commuting derivations. Our proof is based on J. Kovacic's scheme-theoretic approach to the Galois theory of strongly normal extensions which avoids universal differential fields and Kolchin's idiosyncratic axiomatic algebraic groups.

\section{Picard-Vessiot extensions and normality}

All rings are assumed to be commutative and all fields are assumed to be of characteristic zero. As it adds little difficulty we shall also treat the partial case and not just the ordinary case as in the introduction. So let $\D=\{\de_1,\ldots,\de_m\}$ denote a finite set of commuting derivations. We shall use the prefix $\D$ in lieu of the word ``differential''.
We will mostly stick to the conventions of our basic references \cite{Kovacic:differentialgaloistheoryofstronglynormal} and \cite{Kovacic:GeometricCharacterizationofStronglyNormal}. Throughout we assume that the constants $k=K^\D$ of our base differential field $K$ are algebraically closed.

If $S$ and $T$ are subrings of a ring $R$ we denote with $S\cdot T$ the smallest subring of $R$ that contains $S$ and $T$. So explicitly
$S\cdot T=\left\{\sum_i s_it_i| \ s_i\in S,\ t_i\in T\right\}$. With $ST$ we denote the smallest subring of $R$ that contains $S$ and $T$ and is closed under taking inverses, so explicitly
$ST=\{\frac{r}{r'}|\ r,r'\in S\cdot T,\ r'\in R^\times\}$. If $S,T$ and $R$ are fields then $ST$ is simply the field compositum.

\begin{defi}
Let $L|K$ be an extension of $\D$-fields and $\tau_s,\tau_t\colon L\to R$ a pair of $K$-$\D$-morphisms into some $K$-$\D$-algebra $R$. We say that $L|K$ is \emph{$\D$-normal with respect to $(\tau_s,\tau_t)$} if $\tau_t(L)\subset \tau_s(L)R^\D$. If $L|K$ is $\D$-normal with respect to any such pair for every $K$-$\D$-algebra $R$ we say that $L|K$ is \emph{$\D$-normal}.
\end{defi}

\begin{defi}
Let $L|K$ be a finitely $\D$-generated extension of $\D$-fields such that $L^\D=K^\D$. Then $L|K$ is \emph{strongly normal} if $L|K$ is $\D$-normal with respect to any pair $\tau_s,\tau_t\colon L\to M$ of $K$-$\D$-morphisms into some \emph{$\D$-field extension} $M$ of $K$.
\end{defi}
The above definition is slightly different from Kolchin's original one (\cite[p. 393]{Kolchin:differentialalgebraandalgebraicgroups}) who of course always has a universal $\D$-field lurking in the background. The definitions are easily seen to be equivalent (\cite[Prop. 12.2, p. 4489]{Kovacic:differentialgaloistheoryofstronglynormal}).
Moreover one can show (\cite[Prop. 12.4, p. 4490]{Kovacic:differentialgaloistheoryofstronglynormal}) that a strongly normal $\D$-extension is finitely generated as field extension.
We also recall the definition of Picard-Vessiot extensions.

\begin{defi}
Consider a system of linear differential equations of the form
\begin{equation} \label{eq:linear equation}
\delta_i(y)=A_iy, \ i=1,\ldots,m
\end{equation}
over the base $\D$-field $K$, where the matrices $A_i\in K^{n\times n}$ satisfy the integrability conditions
\[\de_i(A_j)-\de_j(A_i)=A_iA_j-A_jA_i, \ 1\leq i,j\leq m.\]

A $\D$-field extension $L$ of $K$ with $L^\D=K^\D$ is called a \emph{Picard-Vessiot extension} for the system (\ref{eq:linear equation}) if it is generated by a fundamental solution matrix, i.e., there exists $Y\in\Gl_n(L)$ such that $\de_i(Y)=A_iY$ for $i=1,\ldots,m$ and $L=K(Y)$.
\end{defi}
By a Picard-Vessiot extension we mean a differential field extension which is a Picard-Vessiot extension for some integrable linear system.
We note that in the ordinary case ($m=1$) the integrability conditions are vacuous and one obtains the usual characterization of Picard-Vessiot extensions.

We now state some preparatory results which will be needed in the proof of the main theorem.

\begin{lemma} \label{lemma: intermediate}
Let $L|K$ be a $\D$-normal extension of $\D$-fields. Then $L|M$ is $\D$-normal for every intermediate $\D$-field $K\subset M\subset L$.
\end{lemma}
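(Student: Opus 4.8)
The plan is simply to unwind the definitions; no serious work is required here. Let $R$ be an $M$-$\D$-algebra and let $\tau_s,\tau_t\colon L\to R$ be a pair of $M$-$\D$-morphisms. First I would observe that $R$, equipped with the composite structure map $K\hookrightarrow M\to R$, is a $K$-$\D$-algebra, and that $\tau_s$ and $\tau_t$ are then automatically $K$-$\D$-morphisms, since they fix $M$ pointwise and hence fix $K$ pointwise. Applying the hypothesis that $L|K$ is $\D$-normal to this $K$-$\D$-algebra $R$ and this particular pair $(\tau_s,\tau_t)$ yields $\tau_t(L)\subset\tau_s(L)R^\D$ directly. The key point making this work is that the subring $\tau_s(L)R^\D$ of $R$, and already the constant ring $R^\D$ itself, is intrinsic to $R$ as a $\D$-ring together with the two maps, and is insensitive to whether we regard $R$ as an algebra over $K$ or over $M$; this is clear from the explicit description of the symbol $ST$ recalled above. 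Since $R$ and the pair $(\tau_s,\tau_t)$ were arbitrary, $L|M$ is $\D$-normal.

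In other words, passing from the base $K$ to an intermediate $\D$-field $M$ only shrinks the class of test objects $(R,\tau_s,\tau_t)$ that must be checked, so $\D$-normality is inherited automatically. I do not expect any obstacle: the statement is the differential-scheme-theoretic counterpart of the classical triviality that a normal (respectively Galois) field extension $L|K$ remains normal over every intermediate field. The only care needed in writing it out is to spell out that an $M$-$\D$-morphism is in particular a $K$-$\D$-morphism and that the formation of $R^\D$ and of $\tau_s(L)R^\D$ does not reference the base.
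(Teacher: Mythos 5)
Your proof is correct and is exactly the paper's argument, just written out in full: the paper's entire proof is the one-line observation that a $\D$-$M$-morphism is a $\D$-$K$-morphism, which is precisely the key point you identify. No further comment is needed.
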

\begin{proof}
This is clear because a $\D$-$M$-morphism is a $\D$-$K$-morphism.
\end{proof}

\begin{lemma} \label{lemma: PV transitive}
Let $L|K$ be a strongly normal extension of $\D$-fields and let $K^\circ$ denote the relative algebraic closure of $K$ inside $L$. Assume that $L|K^\circ$ is Picard-Vessiot. Then $L|K$ is Picard-Vessiot.
\end{lemma}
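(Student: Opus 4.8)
The plan is to pass to Galois groups and invoke the characterization of Picard--Vessiot extensions recalled in the introduction: among strongly normal extensions, the Picard--Vessiot ones are exactly those whose Galois group is linear (affine). So I would reduce the lemma to the assertion that, if the identity component of $\gal(L|K)$ is affine, then $\gal(L|K)$ itself is affine.

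First I would collect the structural facts I need. Since $L|K$ is strongly normal it is finitely generated as a field extension (cited above), so its relative algebraic closure $K^\circ$ is a finite field extension of $K$; in particular the hypothesis ``$L|K^\circ$ is Picard--Vessiot'' is meaningful, and since every Picard--Vessiot extension is strongly normal, $L|K^\circ$ is again strongly normal. Next I would bring in the Galois group $G=\gal(L|K)$, an algebraic group over $k=K^\D$ in Kovacic's sense, together with its identity component $G^\circ$. The point I would use from the Galois correspondence for strongly normal extensions is that $L^{G^\circ}=K^\circ$, so that $\gal(L|K^\circ)=G^\circ$. (For the inclusion $K^\circ\subseteq L^{G^\circ}$ one can argue directly: $G$ permutes the finitely many conjugates over $K$ of a given element of $K^\circ$, so the stabilizer of such an element is a closed subgroup of finite index and hence contains $G^\circ$; conversely $L^{G^\circ}$ is algebraic over $K$ because it corresponds to the finite quotient $G/G^\circ$.)

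The core of the argument is then short. Since $L|K^\circ$ is Picard--Vessiot and strongly normal, the quoted characterization gives that $\gal(L|K^\circ)=G^\circ$ is affine. But an algebraic group over a field whose identity component is affine is itself affine: $G^\circ$ is open of finite index, so $G$ is the disjoint union of the finitely many cosets $gG^\circ$, each of which is isomorphic to $G^\circ$ as a scheme via left translation and hence affine, and a finite disjoint union of affine schemes is affine. Therefore $G=\gal(L|K)$ is affine. As $L|K$ is strongly normal with affine Galois group, the characterization recalled in the introduction shows that $L|K$ is Picard--Vessiot, as desired.

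The only real subtlety I anticipate is the bookkeeping in the middle step: checking carefully, within Kovacic's scheme-theoretic framework, that ``Picard--Vessiot'' translates faithfully into ``affine Galois group'' at the level $L|K^\circ$ and that $\gal(L|K^\circ)$ is genuinely the identity component of $\gal(L|K)$ rather than merely a finite-index closed subgroup. Once that dictionary is pinned down, the passage from $G^\circ$ affine to $G$ affine, and hence back to $L|K$ being Picard--Vessiot, is immediate.
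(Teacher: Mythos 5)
Your proposal is correct and follows essentially the same route as the paper: identify $\gal(L|K^\circ)$ with the identity component of $\gal(L|K)$, note that a group with affine identity component is affine, and invoke the characterization of Picard--Vessiot extensions as the strongly normal ones with affine Galois group. The paper simply cites Kovacic for the identity-component fact and states the affineness step without the coset argument you supply.
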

\begin{proof}
The Galois group of $L|K^\circ$ is the identity component of the Galois group of $L|K$ (\cite[Prop. 31.3, p. 4515]{Kovacic:differentialgaloistheoryofstronglynormal}). If the identity component of an algebraic is affine then the algebraic group itself must be affine. Thus the claim of the lemma follows from the fact that the strongly normal extensions with affine Galois group are precisely the Picard-Vessiot extensions (\cite[Theorem 10.5, p. 4149]{Kovacic:GeometricCharacterizationofStronglyNormal}).
\end{proof}

One of the main ingredients of our proof is the following units theorem due to Ax, Lichtenbaum, Halperin and Sweedler. See \cite[Cor. 1.8, p. 264]{Sweedler:unitsTheorem}.

\begin{theo}
Let $R$ and $S$ be algebras over the field $K$ such that $K$ is absolutely algebraically closed in $R$ and $S$. Then every invertible element of $R\otimes_K S$ is of the form $a\otimes b$ with $a$ invertible in $R$ and $b$ invertible in $S$.
\end{theo}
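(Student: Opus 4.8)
The plan is to strip the statement down, through two reductions, to a geometric fact about regular functions on a product of varieties over an algebraically closed field, to prove that fact by a divisor computation on a partial compactification (or, more quickly, by quoting Rosenlicht's unit theorem), and finally to descend the resulting factorization from $\overline{K}$ back to $K$ by Galois cohomology.

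\emph{Reductions.} If $u\in R\otimes_K S$ is invertible, then $u$ and its inverse lie in $R_0\otimes_K S_0$ for suitable finitely generated $K$-subalgebras $R_0\subseteq R$, $S_0\subseteq S$; since tensoring an injection of $K$-vector spaces over the field $K$ preserves injectivity, $R_0\otimes_K S_0\hookrightarrow R\otimes_K S$, so $u$ is already a unit of $R_0\otimes_K S_0$, while $K$ remains absolutely algebraically closed in the subalgebras $R_0$, $S_0$; a factorization $u=a\otimes b$ over them yields one over $R$, $S$. Hence we may assume $R$, $S$ finitely generated over $K$. Using the hypothesis on $K$ one reduces further to the case where $R\otimes_K\overline{K}$ and $S\otimes_K\overline{K}$ are integral domains, i.e.\ $X:=\spec R$ and $Y:=\spec S$ are geometrically integral $K$-varieties; the non-domain case is handled by passing to the irreducible components of $\spec R$, $\spec S$ and reassembling, which is routine but tedious.

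\emph{The core over $\overline{K}$.} Because $(R\otimes_K S)\otimes_K\overline{K}=(R\otimes_K\overline{K})\otimes_{\overline{K}}(S\otimes_K\overline{K})$, it suffices to prove: for integral affine varieties $X'$, $Y'$ over an algebraically closed field $\overline{K}$, every unit of $\mathcal{O}(X'\times Y')=\mathcal{O}(X')\otimes_{\overline{K}}\mathcal{O}(Y')$ equals $a\otimes b$ with $a\in\mathcal{O}(X')^\times$ and $b\in\mathcal{O}(Y')^\times$. Let $F=\overline{K}(X')$ and restrict a unit $u$ to the generic fibre of $\operatorname{pr}\colon X'\times Y'\to X'$, obtaining a unit $u_F$ of $\mathcal{O}(Y'_F)=F\otimes_{\overline{K}}\mathcal{O}(Y')$. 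Since $Y'$ is integral, Rosenlicht's theorem --- that for a geometrically integral variety $\mathcal{O}(-)^\times$ modulo scalars is a finitely generated free abelian group which does not grow under base field extension --- gives $u_F=\gamma\cdot(1\otimes b)$ with $\gamma\in F^\times$ and $b\in\mathcal{O}(Y')^\times$. Then $w:=u\cdot(1\otimes b^{-1})$ satisfies $w_F=\gamma\in F$; writing $w=\sum x_i\otimes y_i$ with the $y_i$ linearly independent over $\overline{K}$ (and including $1$ among them) forces $w=x\otimes 1$ for some $x\in\mathcal{O}(X')$, necessarily a unit, so $u=x\otimes b$. Without quoting Rosenlicht, one instead fixes a normal projective completion $\overline{Y'}\supseteq Y'$ with boundary $\overline{Y'}\setminus Y'=D_1\cup\cdots\cup D_r$, views $u$ as a rational function on the normal variety $X'\times\overline{Y'}$ (which is proper over $X'$), observes $\operatorname{div}(u)=\sum_i a_i\,(X'\times D_i)$, restricts to a general fibre over $X'$ to identify the $a_i$ with the multiplicities of a constant, hence $a_i=0$, and concludes that $u$ is a global unit on $X'\times\overline{Y'}$, so pulled back from $X'$.

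\emph{Descent.} Over $\overline{K}$ we now have $u\otimes 1=\alpha\otimes\beta$ with $\alpha\in(R\otimes_K\overline{K})^\times$, $\beta\in(S\otimes_K\overline{K})^\times$, and this factorization is unique up to $(\alpha,\beta)\mapsto(\lambda\alpha,\lambda^{-1}\beta)$, $\lambda\in\overline{K}^\times$, since $\operatorname{Frac}(R\otimes_K\overline{K})$ and $\operatorname{Frac}(S\otimes_K\overline{K})$ meet in $\overline{K}$ ($X$, $Y$ being geometrically integral, these are linearly disjoint over $\overline{K}$). The group $\Gamma=\gal(\overline{K}/K)$ acts through the $\overline{K}$-factor, fixes $u\otimes 1$, and has $(R\otimes_K\overline{K})^\Gamma=R$, $(S\otimes_K\overline{K})^\Gamma=S$. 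For $g\in\Gamma$, applying $g$ to $\alpha\otimes\beta=u\otimes 1$ forces $g(\alpha)=\chi(g)\alpha$, $g(\beta)=\chi(g)^{-1}\beta$ for a unique $\chi(g)\in\overline{K}^\times$, and $\chi$ is a continuous $1$-cocycle of $\Gamma$ with values in $\overline{K}^\times$. By Hilbert's Theorem~90 it is a coboundary, $\chi(g)=g(\mu)\mu^{-1}$; then $a:=\mu^{-1}\alpha$ and $b:=\mu\beta$ are $\Gamma$-invariant, i.e.\ $a\in R$ and $b\in S$, while $a\otimes b=\alpha\otimes\beta=u\otimes 1$, so $u=a\otimes b$ in $R\otimes_K S$. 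Faithful flatness of $R\to R\otimes_K\overline{K}$ and of $S\to S\otimes_K\overline{K}$ then upgrades $a$ and $b$ to units of $R$ and $S$, completing the proof.

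\emph{Main obstacle.} The substance is the core step over $\overline{K}$. Granting Rosenlicht's unit theorem it is a few lines, and the two reductions together with the Galois descent are entirely standard. If one insists on a self-contained account, the obstacle is exactly that theorem: securing a suitable projective completion, checking normality of $X'\times\overline{Y'}$, and matching divisor multiplicities along the family $X'\times D_i$ with their values on fibres requires genuine care.
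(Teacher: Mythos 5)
The paper does not prove this theorem at all: it is imported verbatim from the literature (Ax--Lichtenbaum--Halperin--Sweedler, cited as \cite[Cor.~1.8]{Sweedler:unitsTheorem}), and the only thing the paper itself establishes is Corollary~\ref{cor:units}, namely that the hypothesis ``absolutely algebraically closed'' is satisfied when $K$ is relatively algebraically closed in a field $L$ of characteristic zero. So there is no in-paper argument to compare yours against; I can only assess your proof on its own terms. Your route --- reduce to finitely generated algebras, pass to $\overline{K}$, invoke Rosenlicht's unit theorem on a product of integral varieties, and descend by Hilbert~90 using the uniqueness of the factorization up to a scalar --- is the standard geometric proof of this statement for \emph{geometrically integral} finitely generated algebras, and each of those steps is sound as you present it (the finite-generation reduction, the linear-independence argument showing $w=x\otimes 1$, and the cocycle descent are all correct). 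Since the paper only ever applies the theorem with $R=S=L$ a regular extension of $K$ in characteristic zero, your argument does cover everything the paper actually needs.

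The genuine gap is the sentence ``one reduces further to the case where $R\otimes_K\overline{K}$ and $S\otimes_K\overline{K}$ are integral domains; the non-domain case is handled by passing to the irreducible components \dots which is routine but tedious.'' The hypothesis that $K$ is absolutely algebraically closed in $R$ does \emph{not} force $R\otimes_K\overline{K}$ to be a domain, nor even $R$ itself: $K[x,y]/(xy)$ and (over $K=\mathbb{R}$) $\mathbb{R}[x,y]/(x^2+y^2)$ both satisfy the hypothesis, the first being reducible and the second a domain whose geometric fibre splits into two Galois-conjugate components meeting along a subvariety. Handling this requires factoring the unit on each geometric component, reconciling the $\overline{K}^\times$-ambiguities across the (connected, but possibly singular and Galois-permuted) configuration of components, and only then descending; none of that is routine, and it is precisely where the content of Sweedler's general theorem lies --- his own proof is a characteristic-free, purely ring-theoretic induction with no integrality or finiteness hypotheses. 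So either restrict your statement to the geometrically integral case (which suffices for this paper) or supply the component-gluing argument; as written the reduction is asserted, not proved.
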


Here $K$ is called absolutely algebraically closed in $R$ if $M$ is integrally closed in $R\otimes_K M$ for every field extension $M$ of $K$.

\begin{cor} \label{cor:units}
Let $L|K$ be an extension of fields such that $K$ is relatively algebraically closed in $L$. Then every invertible element $u$ of $L\otimes_K L$ is of the form $u=a\otimes b$ for some $a,b\in L\smallsetminus\{0\}$.
\end{cor}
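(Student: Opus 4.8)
The plan is to verify the hypothesis of the units theorem and then quote it. That theorem requires $K$ to be \emph{absolutely} algebraically closed in $L$, so the essential point is to deduce absolute algebraic closedness from relative algebraic closedness. Since $\operatorname{char} K=0$ by our standing assumption, the extension $L|K$ is automatically separable, and together with relative algebraic closedness this means $L|K$ is a \emph{regular} field extension. I would then invoke the classical fact that regularity is preserved under base change: for every field extension $M|K$ the ring $L\otimes_K M$ is an integral domain and $M$ is relatively algebraically closed in its fraction field $\operatorname{Frac}(L\otimes_K M)$.

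Granting this, fix a field extension $M|K$ and let $x\in L\otimes_K M$ be integral over $M$. As $M$ is a field, $x$ is algebraic over $M$, hence lies in the relative algebraic closure of $M$ inside the field $\operatorname{Frac}(L\otimes_K M)$, which is just $M$ by the previous paragraph. So $x\in M$, i.e. $M$ is integrally closed in $L\otimes_K M$. Since $M$ was an arbitrary field extension of $K$, this says exactly that $K$ is absolutely algebraically closed in $L$. Applying the units theorem with $R=S=L$, every invertible element $u$ of $L\otimes_K L$ has the form $u=a\otimes b$ with $a$ invertible in $L$ and $b$ invertible in $L$; since $L$ is a field this means precisely $a,b\in L\smallsetminus\{0\}$, which is the assertion.

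The only ingredient beyond the cited units theorem is the base-change stability of regular extensions (equivalently: that $L\otimes_K M$ is a domain and $M$ is relatively algebraically closed in $\operatorname{Frac}(L\otimes_K M)$ for every field extension $M|K$). This is standard and I would simply cite it from a reference on transcendental field extensions. I expect this to be the only point requiring any care, and even it is not a genuine obstacle: if one wants to keep the argument self-contained, one can instead verify the needed instance directly, by fixing a $K$-basis $(e_i)_i$ of $L$ containing $1$, writing a purported algebraic element of $L\otimes_K M$ as $x=\sum_i e_i\otimes m_i$ with almost all $m_i=0$, and extracting from a monic polynomial relation for $x$ over $M$ that $m_i=0$ whenever $e_i\notin K$, so that $x$ comes from $M$. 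This is bookkeeping rather than a conceptual difficulty, and the high-level route via the units theorem is short and clean.
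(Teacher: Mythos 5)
Your proposal is correct and follows essentially the same route as the paper: deduce that $L|K$ is regular from characteristic zero plus relative algebraic closedness, use the stability of regularity under base change (the paper cites Bourbaki, Prop.~8, Chapter V, \S 17) to conclude that $M$ is integrally closed in $L\otimes_K M$ for every field extension $M|K$, and then apply the units theorem with $R=S=L$. The only difference is that you spell out the passage from ``$M$ algebraically closed in $\operatorname{Frac}(L\otimes_K M)$'' to ``$M$ integrally closed in $L\otimes_K M$,'' which the paper compresses into an ``in particular.''
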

\begin{proof}
We only have to see that $K$ is absolutely algebraically closed in $L$. Let $M$ be a field extension of $K$. Since $L$ is regular over $K$ (remember, we are assuming characteristic zero throughout) it follows from \cite[Prop. 8, Chapter V, \S 17, A.V.142]{Bourbaki:Algebra2} that the field of fractions of the integral domain $L\otimes_K M$ is a regular extension of $M$. In particular $M$ is integrally closed in $L\otimes_K M$.
\end{proof}

Now we are prepared to prove the main theorem:

\begin{theo} \label{theo:main}
Let $L|K$ be a finitely $\D$-generated extension of $\D$-fields with $L^\D=K^\D$. Then $L|K$ is Picard-Vessiot if and only if $L|K$ is $\D$-normal.
\end{theo}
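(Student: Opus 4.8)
The plan is to prove the two implications separately, using the scheme-theoretic Galois theory of strongly normal extensions as the organizing framework. For the easy direction, suppose $L|K$ is Picard-Vessiot for an integrable system with fundamental solution matrix $Y\in\Gl_n(L)$, and let $R$ be any $K$-$\D$-algebra with a pair of $K$-$\D$-morphisms $\tau_s,\tau_t\colon L\to R$. Applying $\tau_s$ and $\tau_t$ to the relations $\de_i(Y)=A_iY$ shows that both $\tau_s(Y)$ and $\tau_t(Y)$ are fundamental solution matrices of the \emph{same} system over $R$, so the matrix $Z=\tau_s(Y)^{-1}\tau_t(Y)\in\Gl_n(R)$ satisfies $\de_i(Z)=0$ for all $i$, i.e.\ $Z\in\Gl_n(R^\D)$. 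Hence $\tau_t(Y)=\tau_s(Y)Z$ has entries in $\tau_s(L)R^\D$, and since $L=K(Y)$ we get $\tau_t(L)\subset\tau_s(L)R^\D$. Thus $L|K$ is $\D$-normal.

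For the hard direction, assume $L|K$ is $\D$-normal. First, $\D$-normality applied to the pair of $K$-$\D$-morphisms $L\to M$ into a $\D$-\emph{field} extension $M$ is exactly the strong normality condition, once one knows $L^\D=K^\D$ (which is hypothesized) and that $L|K$ is finitely generated as a field extension; the latter I would extract by a suitable specialization argument, or by noting it follows a posteriori once the Galois group is identified — this is the kind of point Kovacic's framework handles cleanly. So $L|K$ is strongly normal, and by Lemma~\ref{lemma: PV transitive} it suffices to show that $L|K^\circ$ is Picard-Vessiot, where $K^\circ$ is the relative algebraic closure of $K$ in $L$. By Lemma~\ref{lemma: intermediate}, $L|K^\circ$ is again $\D$-normal, so we have reduced to the case where $K$ is relatively algebraically closed in $L$, and here Corollary~\ref{cor:units} is available: every unit of $L\otimes_K L$ is a simple tensor $a\otimes b$.

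The core of the argument is then to run the $\D$-normality hypothesis with the universal choice $R=L\otimes_K L$ (a $\D$-algebra via the derivations on each factor, extended to the tensor product), $\tau_s=\id_L\otimes 1$ and $\tau_t=1\otimes\id_L$. Strong normality already guarantees, via Kovacic's theory, that the Galois group scheme $\G=\gal(L|K)$ is represented by the $\D$-constants of $L\otimes_K L$ in an appropriate localization; concretely one has an isomorphism identifying the coordinate ring of $\G$ with $(LL)^\D$ sitting inside the total ring of fractions of $L\otimes_K L$, and $\G$ acts so that $\tau_t=\tau_s\cdot(\text{generic point of }\G)$. The task is to show $\G$ is affine. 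Here is where the units theorem does the work: the generic matrix of the $\G$-action, expressed through a chosen set of $\D$-generators of $L|K$, has entries that are ratios of elements of $L\cdot_K L$; $\D$-normality forces these entries to lie in $\tau_s(L)\cdot(L\otimes_K L)^\D$, and the units appearing as denominators, being invertible in $L\otimes_K L$, are simple tensors $a\otimes b$ by Corollary~\ref{cor:units}. Pushing this through, one finds that the $\D$-constants generating the Galois group, after clearing denominators, can be taken to be actual polynomial (not merely rational) functions of finitely many generators — equivalently $\G$ embeds as a closed subgroup of some $\Gl_N$ — so $\G$ is affine. By \cite[Theorem 10.5, p. 4149]{Kovacic:GeometricCharacterizationofStronglyNormal} a strongly normal extension with affine Galois group is Picard-Vessiot, which finishes the case $K=K^\circ$ and hence, by Lemma~\ref{lemma: PV transitive}, the general case.

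The main obstacle is the third paragraph: translating the elementwise inclusion $\tau_t(L)\subset\tau_s(L)(L\otimes_K L)^\D$ into the statement that the Galois group scheme is affine. This requires setting up Kovacic's identification of the Galois group with the $\D$-spectrum of $(LL)^\D$ precisely enough to see that ``denominators are simple tensors'' is exactly the obstruction to affineness, and then checking that the units theorem removes it. One must be careful that $L\otimes_K L$ need not be a domain or even reduced a priori, so ``invertible element'' and ``total ring of fractions'' must be handled at the scheme-theoretic level rather than naively; this is precisely why the reduction to $K$ relatively algebraically closed (so that Corollary~\ref{cor:units} applies) is made first.
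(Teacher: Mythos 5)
Your easy direction and your two reduction steps (to strong normality, and to $K$ relatively algebraically closed in $L$ via Lemmas \ref{lemma: intermediate} and \ref{lemma: PV transitive}) match the paper, and you have correctly located the crux: the denominators occurring in $\tau_t(b)\in\tau_s(L)(L\otimes_KL)^\D$ are units of $L\otimes_KL$ itself (not merely of its total quotient ring), so Corollary \ref{cor:units} writes them as simple tensors. But the decisive step --- converting ``denominators are simple tensors'' into ``$L|K$ is Picard-Vessiot'' --- is exactly the part you leave as an acknowledged obstacle (``Pushing this through, one finds\dots''). Your proposed route, through affineness of the Galois group scheme, is never executed: you do not say how a closed embedding $\G\hookrightarrow\Gl_N$ is to be produced from the simple-tensor denominators, and setting up Kovacic's identification of $\G$ with the constants of the quotient ring precisely enough to make that work is a substantial piece of mathematics that the sketch does not contain. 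As written, the hard direction is a plan rather than a proof.

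The paper closes this gap by a different and much more economical device: the Galois group never appears in the core argument. Set $D=(L\otimes_KL)^\D$ and let $R=\{a\in L:\ 1\otimes a\in L\cdot D\}$ be Kovacic's ring of Picard-Vessiot elements; by \cite[Prop. 10.1, p. 4148]{Kovacic:GeometricCharacterizationofStronglyNormal} the quotient field of $R$ is automatically a Picard-Vessiot extension of $K$, so it suffices to show $\operatorname{Quot}(R)=L$. Given $b\in L$, $\D$-normality for the pair $\tau_s(a)=a\otimes 1$, $\tau_t(a)=1\otimes a$ yields $1\otimes b=\bigl(\sum a_id_i\bigr)/u$ with $a_i\in L$, $d_i\in D$ and $u\in L\cdot D$ a unit of $L\otimes_KL$; Corollary \ref{cor:units} gives $u=a'\otimes a$, whence $1\otimes a\in L\cdot D$, i.e.\ $a\in R$, and multiplying through by $1\otimes a$ gives $1\otimes ab=\sum (a_i/a')\,d_i\in L\cdot D$, so $ab\in R$ and $b=ab/a\in\operatorname{Quot}(R)$. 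This short computation is what your third paragraph is missing; the alternative of carrying out the affineness argument in detail would essentially amount to reproving Kovacic's Prop.~10.1.
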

\begin{proof}
The ``only if''-statement is easy: Assume that $L$ is a Picard-Vessiot extension of $K$ for an integrable system
\begin{equation} \label{eq:linear equation 2}
\delta_i(y)=A_iy, \ i=1,\ldots,m
\end{equation}
with $A_i\in K^{n\times n}$ ($i=1,\ldots,m$) and fundamental solution matrix $Y\in\Gl_n(L)$.
Let $R$ be a $K$-$\D$-algebra and $\tau_s,\tau_t\colon L\to R$ a pair of $K$-$\D$-morphisms. Then $\tau_s(Y)$ and $\tau_t(Y)$ are fundamental solution matrices for (\ref{eq:linear equation 2}) inside $\Gl_n(R)$. A well-known computation (cf. \cite[p. 8]{SingerPut:differential}) shows that there exists a matrix $C\in\Gl_n(R^\D)$ such that $\tau_t(Y)=\tau_s(Y)C$. Because $L$ is generated by the entries of $Y$ this shows that $\tau_t(L)\subset \tau_s(L)R^\D$. Therefore $L|K$ is $\D$-normal.

\bigskip

Now assume that $L|K$ is $\D$-normal. We have to show that $L|K$ is Picard-Vessiot. We will first reduce to the case that $K$ is relatively algebraically closed in $L$: Let $K^\circ$ denote the relative algebraic closure of $K$ inside $L$. By Lemma \ref{lemma: intermediate} the extension $L|K^\circ$ is $\D$-normal. So, by assumption, $L|K^\circ$ is Picard-Vessiot and it follows from Lemma \ref{lemma: PV transitive} that $L|K$ is Picard-Vessiot.

So from now we will assume that $K$ is relatively algebraically closed in $L$. Clearly $L|K$ is strongly normal, so all the results form \cite{Kovacic:differentialgaloistheoryofstronglynormal} and \cite{Kovacic:GeometricCharacterizationofStronglyNormal} are available.
We consider $L\otimes_K L$ as $L$-algebra via the first factor.

We set $D:=(L\otimes_K L)^\D$ and $R:=\{a\in L|\ 1\otimes a\in L\cdot D\}\subset L$. Then $R$ is a $K$-$\D$-subalgebra of $L$ and an element of $R$ is said to be Picard-Vessiot over $K$ (\cite[Def. 8.5, p. 4147]{Kovacic:GeometricCharacterizationofStronglyNormal}). One knows (\cite[Prop. 10.1, p. 4148]{Kovacic:GeometricCharacterizationofStronglyNormal}) that the quotient field of $R$ is a Picard-Vessiot extension of $K$. Thus it will suffice to show that the quotient field of $R$ is equal to $L$.


So let $b\in L$. By assumption $L|K$ is $\D$-normal with respect to $\tau_s,\tau_t\colon L\to L\otimes_K L$ where $\tau_s(a)=a\otimes 1$ and $\tau_t(a)=1\otimes a$ respectively. Consequently $1\otimes b\in LD$. This means that we can express $1\otimes b$ in the form
\begin{equation} \label{eq: for b}
1\otimes b=\frac{\sum a_id_i}{u}
\end{equation}
where $a_i\in L$, $d_i\in D$ and $u\in L\cdot D$ is a unit of $L\otimes_K L$. As $K$ is relatively algebraically closed in $L$ we can apply the units theorem (Corollary \ref{cor:units}) to $L\otimes_K L$ to conclude that $u$ is of the form $u=a'\otimes a$ with $a',a\in L\smallsetminus\{0\}$. As $u\in L\cdot D$ we see that $1\otimes a\in L\cdot D$, i.e., $a\in R$.
Multiplying equation (\ref{eq: for b}) with $1\otimes a$ we find that
\[1\otimes ab=\sum\tfrac{a_i}{a'}d_i\in L\cdot D.\]
Therefore also $ab\in R$ and $b=\frac{ab}{a}$ lies in the quotient field of $R$, as desired.

\end{proof}

The idea of replacing the field by an algebra in the normality condition does have an analog in classical Galois theory. In fact, it is well known that a finite field extension $L|K$ is Galois if and only if $1\otimes L\subset L\otimes 1\cdot E(L\otimes_KL)$. Here the role of the constants is played by the idempotent elements $E(L\otimes_K L)$ of $L\otimes_K L$. The reader is referred to \cite{Borceuxetal:GaloisTheories} for a presentation of Galois theory that emphasizes the role of idempotent elements.

We note that for a finite field extension $L|K$ every non-zero divisor of $L\otimes_K L$ is invertible. So taking the total ring of fractions would not make a difference. However, in the differential world it makes a difference. Taking the ring of fractions leads to the strongly normal theory whereas not taking the ring of fractions leads to Picard-Vessiot theory:

\begin{theo}
Let $L|K$ be a finitely $\D$-generated extension of $\D$-fields with $L^\D=K^\D$.
\begin{enumerate}
\item $L|K$ is Picard-Vessiot if and only if $L|K$ is $\D$-normal with respect to $\tau_s,\tau_t\colon L\to L\otimes_K L$ given by $\tau_s(a)=a\otimes 1$ and $\tau_t(a)=1\otimes a$.
\item $L|K$ is strongly normal if and only if $L|K$ is $\D$-normal with respect to $\tau_s,\tau_t\colon L\to \operatorname{Quot}(L\otimes_K L)$ given by $\tau_s(a)=a\otimes 1$ and $\tau_t(a)=1\otimes a$.
\end{enumerate}
\end{theo}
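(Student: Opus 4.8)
The plan is to prove both equivalences by the same device: the displayed pair $(\tau_s,\tau_t)$ is generic enough that $\D$-normality with respect to it alone forces $\D$-normality with respect to all the pairs one needs. The one tool used throughout is a \emph{pushforward principle}: if $L|K$ is $\D$-normal with respect to $(\sigma_s,\sigma_t)\colon L\to R$ and $\psi\colon R\to S$ is a $K$-$\D$-morphism, then $L|K$ is $\D$-normal with respect to $(\psi\sigma_s,\psi\sigma_t)\colon L\to S$. Indeed a $K$-$\D$-morphism carries constants to constants and units to units, so applying $\psi$ to an identity $\sigma_t(b)=(\sum_i\sigma_s(a_i)d_i)/u$ with the $d_i$ in $R^\D$ and $u\in\sigma_s(L)\cdot R^\D$ a unit produces such an identity for $\psi\sigma_t(b)$ over $S$.

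For part (i) the ``only if'' direction is immediate: by the ``only if'' part of Theorem \ref{theo:main} a Picard-Vessiot extension is $\D$-normal, in particular $\D$-normal with respect to the given pair. For the converse, $(L\otimes_K L,\tau_s,\tau_t)$ is the universal pair of $K$-$\D$-morphisms out of $L$: for any $K$-$\D$-algebra $R$ and $K$-$\D$-morphisms $\sigma_s,\sigma_t\colon L\to R$ the universal property of the tensor product yields a $K$-algebra map $\psi\colon L\otimes_K L\to R$ with $\psi\tau_s=\sigma_s$ and $\psi\tau_t=\sigma_t$, and a routine check (using that $\sigma_s,\sigma_t$ are $\D$-morphisms) shows $\psi$ is a $\D$-morphism. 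By the pushforward principle $\D$-normality with respect to $(\tau_s,\tau_t)$ forces $\D$-normality with respect to every pair, so $L|K$ is $\D$-normal and hence Picard-Vessiot by Theorem \ref{theo:main}.

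For part (ii) write $E:=\operatorname{Quot}(L\otimes_K L)$. For ``only if'', a strongly normal extension is finitely generated as a field extension (\cite[Prop.~12.4]{Kovacic:differentialgaloistheoryofstronglynormal}), so $L\otimes_K L$ is a reduced (characteristic zero) Noetherian ring and $E=\prod_{i=1}^{r}M_i$ with $M_i:=\kappa(\p_i)$ running over the minimal primes; each $\p_i$ is a $\D$-ideal, so $M_i$ is a $\D$-field extension of $K$ with the two $K$-$\D$-morphisms $\sigma_s^{(i)},\sigma_t^{(i)}\colon L\to M_i$ induced by $a\mapsto a\otimes 1$ and $a\mapsto 1\otimes a$. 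Strong normality gives $\sigma_t^{(i)}(L)\subset\sigma_s^{(i)}(L)M_i^\D$ for each $i$, and the orthogonal idempotents of $E=\prod_iM_i$ lie in $E^\D=\prod_iM_i^\D$, so one can reassemble these finitely many identities into one witnessing $\tau_t(L)\subset\tau_s(L)E^\D$. For ``if'', assume $L|K$ is $\D$-normal with respect to $(\tau_s,\tau_t)\colon L\to E$ and let $\sigma_s,\sigma_t\colon L\to M$ be $K$-$\D$-morphisms into a $\D$-field extension $M$ of $K$. If the induced map $L\otimes_K L\to M$ has a minimal prime $\p$ as kernel it extends to $E\to M$ (a non-zero-divisor of the reduced ring $L\otimes_K L$ lies in no minimal prime) and factors through the projection of $E$ onto $\kappa(\p)$, so the pushforward principle applies. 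A general pair is reduced to this case by a ``generic bridge'': one enlarges $M$ to a $\D$-field extension $N$ carrying an embedding $\sigma_u\colon L\to N$ in general position over $K$ relative to both $\sigma_s(L)$ and $\sigma_t(L)$, so that $(\sigma_s,\sigma_u)$ and $(\sigma_u,\sigma_t)$ have minimal-prime kernels; then $\sigma_u(L)\subset\sigma_s(L)N^\D$ and $\sigma_t(L)\subset\sigma_u(L)N^\D$, hence $\sigma_t(L)\subset\sigma_s(L)N^\D$, and the classical lemma that $\D$-constants linearly independent over $M^\D$ remain linearly independent over $M$ lets one descend this inclusion from $N$ back to $M$, yielding $\sigma_t(L)\subset\sigma_s(L)M^\D$.

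The main obstacle is making the ``if'' direction of (ii) self-contained. The minimal-prime case is easy, but the reduction of an arbitrary pair to it hinges on the generic bridge $(N,\sigma_u)$, and producing it — adjoining a sufficiently generic copy of $L$ and verifying that the two resulting kernels are genuinely \emph{minimal} primes of $L\otimes_K L$ — is precisely the step the classical theory outsources to a universal differential field, so here it must be performed directly in the style of \cite{Kovacic:differentialgaloistheoryofstronglynormal}; one also wants $L|K$ finitely generated as a field extension (automatic once strong normality is known, but to be extracted from the hypothesis here) so that $L\otimes_K L$ is well enough behaved — its zero-divisors being the union of its minimal primes — for these arguments. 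This can all be bypassed by appealing to the structure theory of strongly normal extensions, which characterizes them among finitely $\D$-generated extensions with no new constants by the equality $E=(L\otimes 1)E^\D$, i.e. exactly by $\D$-normality of $(\tau_s,\tau_t)\colon L\to E$.
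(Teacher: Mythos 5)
Your part (i) is correct and takes a genuinely shorter route than the paper. Your pushforward principle is sound (a $K$-$\D$-morphism sends constants to constants and units to units, so it transports an identity $u\,\tau_t(b)=\sum\tau_s(a_i)d_i$), and combining it with the universal property of $L\otimes_K L$ as the coproduct of two copies of $L$ immediately upgrades $\D$-normality with respect to $(\tau_s,\tau_t)$ to $\D$-normality with respect to \emph{every} pair, whence Picard-Vessiot by Theorem \ref{theo:main}. The paper instead argues: normality into $L\otimes_KL$ implies normality into $\operatorname{Quot}(L\otimes_KL)$, hence strong normality; then it passes to the relative algebraic closure $K^\circ$, pushes the hypothesis forward along $L\otimes_KL\to L\otimes_{K^\circ}L$, reruns the units-theorem computation to get $L|K^\circ$ Picard-Vessiot, and finishes with Lemma \ref{lemma: PV transitive}. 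Your observation that the displayed pair is universal makes all of that unnecessary for (i); what the paper's detour buys is nothing for (i) itself, since Theorem \ref{theo:main} already did the hard work.

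In part (ii) the ``only if'' direction matches the paper (decompose $\operatorname{Quot}(L\otimes_KL)$ into finitely many $\D$-fields $e_iM$, apply strong normality factorwise, reassemble with the constant idempotents). The genuine gap is in the ``if'' direction, and you have flagged it yourself: your argument only covers pairs $\sigma_s,\sigma_t\colon L\to M$ whose associated kernel is a \emph{minimal} prime of $L\otimes_KL$, and the ``generic bridge'' $(N,\sigma_u)$ needed to reduce an arbitrary pair to that case is asserted, not constructed — building it and verifying that both auxiliary kernels are minimal primes is exactly the nontrivial content you would be reproving. The paper closes precisely this gap by citation: Kolchin's Proposition 10 \cite[Ch.~VI, \S 3, p.~393]{Kolchin:differentialalgebraandalgebraicgroups} states that strong normality need only be verified for the \emph{generic} isomorphisms, i.e.\ exactly the pairs $\tau_{s,i},\tau_{t,i}\colon L\to e_iM$ attached to the minimal primes — which is the case your pushforward handles. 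Your proposed ``bypass'' via a characterization of strongly normal extensions by $\operatorname{Quot}(L\otimes_KL)=(L\otimes 1)\operatorname{Quot}(L\otimes_KL)^\D$ is essentially a restatement of what is to be proved, so it needs to be anchored to a precise reference (Kolchin's Prop.~10, or the corresponding results in \cite{Kovacic:differentialgaloistheoryofstronglynormal}) rather than to ``the structure theory''. One further point: in the ``if'' direction you may not assume $L|K$ is finitely generated as a field extension (that is a \emph{consequence} of strong normality), so the finiteness of the set of minimal primes of $L\otimes_KL$ should be obtained as in the paper, from the Ritt--Raudenbush basis theorem applied to a finitely $\D$-generated reduced subalgebra of which $L\otimes_KL$ is a localization, not from Noetherianity.
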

\begin{proof}
As the first step we will show that $L|K$ is strongly normal if $L|K$ is $\D$-normal with respect to $\tau_s,\tau_t\colon L\to \operatorname{Quot}(L\otimes_K L)$ given by $\tau_s(a)=a\otimes 1$ and $\tau_t(a)=1\otimes a$.

For this we need to know that $L\otimes_K L$ has only finitely many minimal prime ideals. One way to see this is as follows: Assume that $a_1,\ldots,a_n\in L$ generate $L$ as $\D$-field over $K$. Let $R$ denote the $K$-$\D$-subalgebra of $L\otimes_K L$ generated by $a_1\otimes 1,\ldots,a_n\otimes 1, 1\otimes a_1,\ldots,1\otimes a_n$. Because we are in characteristic zero $L\otimes_K L$ is reduced and so also $R$ is reduced. It thus follows from the Ritt-Raudenbusch basis theorem that $R$ has only a finite number of minimal prime ideals. As $L\otimes_K L$ is a localization of $R$ also $L\otimes_K L$ has finitely many minimal prime ideals. So $L\otimes_K L$ is reduced and has only a finite number of minimal prime ideals. This implies that $M:=\operatorname{Quot}(L\otimes_K L)$ is a finite direct product of fields. So we may write $M=e_1M\oplus\cdots\oplus e_rM$ where $e_1,\ldots,e_r\in M$ are pairwise orthogonal idempotent elements such that $e_iM$ is a field for $i=1,\ldots,r$. From $e_i^2=e_i$ we deduce that $2e_i\de_j(e_i)=\de_j(e_i)$ so that $\de_j(e_i)\in e_iM$ for $i=1,\ldots,r$ and $j=1,\ldots,m$.
It follows that the $e_iM$'s are $\D$-fields and that $e_1,\ldots,e_r\in M^\D$.

For every $i=1,\ldots,r$ we obtain a pair of $K$-$\D$-morphisms $\tau_{s,i}\tau_{t,i}\colon L\to e_iM$ by composing $a\mapsto a\otimes 1$ and $a\mapsto 1\otimes a$ with the projection $M\to e_iM$. As $L|K$ is $\D$-normal with respect to $\tau_s,\tau_t\colon L\to \operatorname{Quot}(L\otimes_K L)$ given by $\tau_s(a)=a\otimes 1$ and $\tau_t(a)=1\otimes a$ we infer that
$L|K$ is $\D$-normal with respect to ($\tau_{s,i},\tau_{t,i}$) for $i=1,\ldots,r$. It now follows from \cite[Prop. 10, Chapter VI, Section 3, p. 393]{Kolchin:differentialalgebraandalgebraicgroups} that $L|K$ is strongly normal. (As explained in \cite[Section 7]{Kovacic:differentialgaloistheoryofstronglynormal} (equivalence classes of) ``isomorphisms'' correspond to prime $\D$-ideals of $L\otimes_K L$ and ``specialization of isomorphisms'' corresponds to inclusion of ideals. Moreover the $e_iM$'s are the residue fields of the minimal prime ideals of $L\otimes_K L$.)

Now we are in a position to prove (i). The ``only if'' implication of (i) is clear from Theorem \ref{theo:main}. So we assume that $L|K$ is $\D$-normal with respect to $\tau_s,\tau_t\colon L\to L\otimes_K L$. Clearly this implies that $L|K$ is $\D$-normal with respect to $\tau_s,\tau_t\colon L\to \operatorname{Quot}(L\otimes_K L)$. As seen above this implies that $L|K$ is strongly normal. Let $K^\circ$ denote the relative algebraic closure of $K$ in $L$. By Lemma \ref{lemma: PV transitive} it suffices to show that $L|K^\circ$ is Picard-Vessiot. Considering the natural surjection $L\otimes_K L\to L\otimes_{K^\circ} L$ we find that $L|K^\circ$ is $\D$-normal with respect to $\tau_s,\tau_t\colon L\to L\otimes_{K^ \circ} L$ given by $\tau_s(a)=a\otimes 1$ and $\tau_t(a)=1\otimes a$. Now we can proceed as in the proof of Theorem \ref{theo:main} to deduce that $L|K^\circ$ is Picard-Vessiot.

It remains to prove that $L|K$ is $\D$-normal with respect to $\tau_s,\tau_t\colon L\to \operatorname{Quot}(L\otimes_K L)$ if $L|K$ is strongly normal. Let $a\in L$. Then for every $i=1,\ldots,r$ we can write down a formula expressing the fact that $\tau_{t,i}(a)=e_i(1\otimes a)\in \tau_{s,i}(L)(e_iM)^\D\subset e_iM$. Since the $e_i$'s are constant these formulas can be patched together to yield a formula expressing that $1\otimes a\in \tau_s(L)M^\D$.
\end{proof}

\bibliographystyle{alpha}
\bibliography{bibdata.bib}

\begin{thebibliography}{AMT09}

\bibitem[AMT09]{AmanoMasuokaTakeuchi:HopfPVtheory}
Katsutoshi Amano, Akira Masuoka, and Mitsuhiro Takeuchi.
\newblock Hopf algebraic approach to {P}icard-{V}essiot theory.
\newblock In {\em Handbook of algebra. {V}ol. 6}, volume~6 of {\em Handb.
  Algebr.}, pages 127--171. Elsevier/North-Holland, Amsterdam, 2009.

\bibitem[BJ01]{Borceuxetal:GaloisTheories}
Francis Borceux and George Janelidze.
\newblock {\em Galois theories}, volume~72 of {\em Cambridge Studies in
  Advanced Mathematics}.
\newblock Cambridge University Press, Cambridge, 2001.

\bibitem[Bou90]{Bourbaki:Algebra2}
N.~Bourbaki.
\newblock {\em Algebra. {II}. {C}hapters 4--7}.
\newblock Elements of Mathematics (Berlin). Springer-Verlag, Berlin, 1990.
\newblock Translated from the French by P. M. Cohn and J. Howie.

\bibitem[Kol55]{Kolchin:OntheGaloisTheoryofDifferentialfields}
E.~R. Kolchin.
\newblock On the {G}alois theory of differential fields.
\newblock {\em Amer. J. Math.}, 77:868--894, 1955.

\bibitem[Kol73]{Kolchin:differentialalgebraandalgebraicgroups}
E.~R. Kolchin.
\newblock {\em Differential algebra and algebraic groups}.
\newblock Academic Press, New York, 1973.
\newblock Pure and Applied Mathematics, Vol. 54.

\bibitem[Kov03]{Kovacic:differentialgaloistheoryofstronglynormal}
Jerald~J. Kovacic.
\newblock The differential {G}alois theory of strongly normal extensions.
\newblock {\em Trans. Amer. Math. Soc.}, 355(11):4475--4522, 2003.

\bibitem[Kov06]{Kovacic:GeometricCharacterizationofStronglyNormal}
Jerald~J. Kovacic.
\newblock Geometric characterization of strongly normal extensions.
\newblock {\em Trans. Amer. Math. Soc.}, 358(9):4135--4157 (electronic), 2006.

\bibitem[Mag94]{Magid:LecturesOnDifferentialGaloisTheory}
Andy~R. Magid.
\newblock {\em Lectures on differential {G}alois theory}, volume~7 of {\em
  University Lecture Series}.
\newblock American Mathematical Society, Providence, RI, 1994.

\bibitem[Mag99]{Magid:DifferentialGaloisTheoryNotices}
Andy~R. Magid.
\newblock Differential {G}alois theory.
\newblock {\em Notices Amer. Math. Soc.}, 46(9):1041--1049, 1999.

\bibitem[Swe70]{Sweedler:unitsTheorem}
Moss~Eisenberg Sweedler.
\newblock A units theorem applied to {H}opf algebras and {A}mitsur cohomology.
\newblock {\em Amer. J. Math.}, 92:259--271, 1970.

\bibitem[vdPS03]{SingerPut:differential}
Marius van~der Put and Michael~F. Singer.
\newblock {\em Galois theory of linear differential equations}, volume 328 of
  {\em Grundlehren der Mathematischen Wissenschaften [Fundamental Principles of
  Mathematical Sciences]}.
\newblock Springer-Verlag, Berlin, 2003.

\end{thebibliography}

\end{document}